\documentclass[12pt,reqno]{amsart}
\usepackage{amsmath}

\textwidth 6.5in \textheight 9in \oddsidemargin -0.1 in
\evensidemargin -0.1 in \topmargin -0.3in

\usepackage{graphicx}
\setcounter{page}{1}
\newtheorem{defi}{Definition}
\newcommand{\brdef}{\begin{defi}}
\newcommand{\erdef}{\end{defi}}

\newtheorem{cor}{Corollary}
\newcommand{\bcor}{\begin{cor}}
\newcommand{\ecor}{\end{cor}}

\newtheorem{thm}{Theorem}
\newcommand{\bth}{\begin{thm}}
\newcommand{\eth}{\end{thm}}
\newtheorem{lem}{Lemma}
\newcommand{\ble}{\begin{lem}}
\newcommand{\ele}{\end{lem}}

\def\pn{\par\noindent}

\parskip.25cm
 \huge

\usepackage{hyperref}
\usepackage{mathrsfs}
%\theoremstyle{definition} \theoremstyle{remark}
%\numberwithin{equation}{section}

%\theoremstyle{remark}

\numberwithin{equation}{section}

%%======================================================

\begin{document}
\begin{center}
{\large \bf Certain results on $(LCS)_{n}$-manifolds} \\
\
\
\\
{Vishnuvardhana. S. V.\footnote{ corresponding author.}$^a$, Venkatesha$^a$ and S. K. Hui$^b$}\\
$^a$ Department of Mathematics, Kuvempu University,\\
Shankaraghatta - 577 451, Shimoga, Karnataka, INDIA.\\
\pn{\tt e-mail:{\verb+svvishnuvardhana@gmail.com+, \verb+vensmath@gmail.com+}}\\
$^b$ Department of Mathematics, Bankura University,\\
Bankura - 722 155, West Bengal, INDIA.\\
\pn{\tt e-mail:{\verb+shyamal_hui@yahoo.co.in+}}
\end{center}
%%%%%%%%%%%%%%%%%%%%%%%%%%%%%%%%%
\begin{quotation}
{\bf Abstract:} The purpose of the present paper is to study semi-generalized recurrent, semi-generalized Ricci recurrent and conformal Ricci soliton on $(LCS)_{n}$-manifold.\\
\textbf{Key Words:} $(LCS)_{n}$-manifold, semi-generalized recurrent manifold, semi-generalized Ricci recurrent manifold, conformal Ricci soliton, Einstein manifold.\\
\textbf{AMS Subject Classification:} 53C15, 53C25, 53C50.
\end{quotation}
%%%%%%%%%%%%%%%%%%%%%%%%%%%%%%%%%%%%%%%%%
\section{Introduction}
\par As a generalization of LP-Sasakian manifold (\cite{KMatsumoto}, \cite{IMihaiRRosca}), Shaikh (\cite{AAShaikh1}, \cite{AAShaikhKKBaishya1}, \cite{AAShaikhKKBaishya2}) introduced the notion of $(LCS)_{n}$-manifolds along with their existence and applications to the general theory of relativity and cosmology. Moreover, Shaikh and his coauthors (\cite{AAShaikh1}-\cite{AASSKHui}) studied $(LCS)_{n}$-manifolds by imposing various curvature restrictions. The $(LCS)_{n}$-manifolds have also been studied by Atceken \cite{MAtceken}, Hui et. al (\cite{ATEHUI}, \cite{CHS}, \cite{SKH}, \cite{SHMA}, \cite{SKH-DC}, \cite{HUC}), Narain and Yadav \cite{DNarainSYadav}, Prakasha \cite{DGPrakasha}, Sreenivasa et al.\cite{GTSVCSB}, Venkatesha and Kumar \cite{VRTNK}, Yadav et al.\cite{SKYPKDDS}, and others.

\par Locally symmetric manifolds were weakened by many geometers in different extents. In those, the idea of recurrent manifolds was introduced by Walker in 1950 \cite{AGWalker}. On the other hand, De and Guha \cite{UCDeNGuha} introduced generalized recurrent manifold ($GK_n$) with the non-zero 1-form $A$ and another non-zero associated 1-form $B$. If the associated 1-form $B$ becomes zero, then the manifold $GK_n$ reduces to a recurrent manifold ($K_n$) introduced by Ruse \cite{HSRuse} and Walker \cite{AGWalker}.

\par The notion of recurrent manifolds have been generalized by various authors as Ricci-recurrent manifolds $(R_{n})$ by Patterson \cite{EMPatterson},  2-recurrent manifolds by Lichnerowicz \cite{ALinchnerowicz}, projective 2-recurrent manifolds by Ghosh \cite{DGhosh} and  generalized Ricci recurrent manifold $(GR_{n})$ by De et al \cite{UCDeNGuhaDKamilya} etc.

\par Recently, semi generalized recurrent condition was introduced and studied on Lorentzian $\alpha$-Sasakian manifolds and P-Sasakian manifolds by Dey and Bhattacharyya \cite{SDAB} and Singh et. al \cite{ASJPSRK} respectively.
\begin{defi}
\par A Riemannian manifold $(M^n, g)$ is said to be semi-generalized recurrent manifold if
\begin{equation}
 (\nabla_{X}R)(Y, Z)W=A(X)R(Y, Z)W+ B(X)g(Z, W)Y, \label{I1}
\end{equation}
holds. Here $A$ and $B$ are the 1-forms defined by
\begin{equation}
A(X)=g(X, \rho_{1}), \,\,\,\,\,\,\,\,B(X)=g(X, \rho_{2}). \label{I2}
\end{equation}
\end{defi}
%%%%%%%%%%%%%%%%%%%%%%%%%%%%%%%%%%%%%%%%%%%%%5
\begin{defi}
\par A Riemannian manifold $(M^n, g)$ is said to be semi-generalized Ricci recurrent manifold if
\begin{equation}
 (\nabla_{X}S)(Y, Z)=A(X)S(Y, Z)+n B(X)g(Y, Z), \label{I3}
\end{equation}
holds, where $A$ and $B$ are 1-forms and are defined as in (\ref{I2})
\end{defi}
Our work is structured as follows: In section 2, we give a brief information about $(LCS)_{n}$-manifolds. The next three sections are respectively devoted to the study of semi generalized recurrent, semi generalized $\phi$-recurrent and semi generalized Ricci recurrent conditions on $(LCS)_{n}$-manifolds. We have presented an example to verify our result in section~5. Section~6 deals with conformal Ricci solitons on $(LCS)_{n}$-manifold.
Here we mainly studied conformal Ricci soliton in $(LCS)_{n}$-manifold satisfying $R(\xi, X)\cdot \tilde{M}=0$ and $C(\xi, X)\cdot S=0$.
%%%%%%%%%%%%%%%%%%%%%%%%%%%%%%%%%%%%
\section{preliminaries}
%%%%%%%%%%%%%%%%%%%%%%%%%%%%%%%%%%%%%%%%%%%%%%%%%
An $n$-dimensional Lorentzian manifold $M$ is a smooth connected paracompact Hausdorff manifold with a Lorentzian metric $g$, that is,
$M$ admits a smooth symmetric tensor field $g$ of type (0,2) such that for each point $p\in M$, the tensor
$g_{p}:T_{p}M\times T_{p}M \longrightarrow R$ is a non-degenerate inner product of signature $(-,+, \cdots ,+)$, where $T_{p}M$ denotes the tangent space of $M$ at $p$ and $R$ is a real number space. A non-zero vector $v\in T_{p}M$ is said to be timelike (resp., non-spacelike, null, spacelike) if it satisfies $g_{p}(v,v)<0$ $(resp., \leq 0, =0, >0)$ \cite{BONeill}.
%%%%%%%%%%%%%%%%%%%%%%%%%%%%%%%%5
\begin{defi}
In a Lorentzian manifold $(M, g)$ a vectorfield $P$ defined by $g(X, P)=A(X)$, for any $X\in \Gamma(TM)$, is said to be a concircular vector field if $$(\nabla_{X}A)(Y)=\alpha\{g(X,Y)+\omega(X)A(Y)\},$$ where $\alpha$ is a non-zero scalar and $\omega$ is a closed 1-form and $\nabla$ denotes the operator of covariant differentiation of $M$ with respect to the Lorentzian metric $g$.
\end{defi}
%%%%%%%%%%%%%%%%%%%%%%%%%%%%%%%%%%%%%%
\indent Let $M$ be an $n$-dimensional Lorentzian manifold admitting a unit timelike concircular vectorfield $\xi$, called the characteristic vector field of the manifold. Then we have
 \begin{equation}
 g(\xi, \xi)=-1, \label{1.1}
 \end{equation}
\par Since $\xi$ is a unit concircular vector field, it follows that there exists a non-zero 1-form $\eta$ such that for
\begin{equation}
 g(X, \xi)=\eta(X), \label{1.2}
 \end{equation}
the equation of the following form holds for all vector fields $X, Y,$
\begin{equation}
 (\nabla_{X}\eta)(Y)=\alpha\{g(X,Y)+\eta(X)\eta(Y)\}, \,\,\,\, \alpha\neq 0 \label{1.3}
 \end{equation}
 where $\alpha$ is a non-zero scalar function satisfying
\begin{equation}
 \nabla_{X}\alpha=X(\alpha)=d\alpha(X)=\rho \eta(X), \label{1.4}
 \end{equation}
$\rho$ being a certain scalar function given by $\rho=-(\xi\alpha)$. Let us take
\begin{equation}
\phi X=X(\alpha)=\frac{1}{\alpha}\nabla_{X}\xi, \label{1.5}
 \end{equation}
 then by virtue of (\ref{1.3}) and (\ref{1.5}), we have
 \begin{equation}
\phi X=X+\eta(X)\xi \label{1.6}
 \end{equation}
from which it follows that $\phi$ is a symmetric (1,1) tensor, called the structure tensor of the manifold.
Thus the Lorentzian manifold $M$ together with the unit timelike concircular vectorfield $\xi$, its associated 1-form $\eta$ and an (1,1) tensorfield $\phi$ is said to be a Lorentzian concircular structure manifold (briefly, $(LCS)_{n}$-manifold) \cite{AAShaikh1}. Especially, if we take $\alpha=1$, then we obtain the LP-Sasakian structure of Matsumoto \cite{KMatsumoto}.\\
The following relations holds in a $(LCS)_{n}$-manifold $(n>2)$ (\cite{AAShaikh1}, \cite{AAShaikhKKBaishya1}):
\begin{eqnarray}
& &\label{1.7} \phi^2=I+\eta  \circ  \xi,\\
& &\label{1.8}\eta(\xi)=-1, \,\,\,\,\, \phi \xi=0, \,\,\,\,\,\, \eta \circ \phi =0, \,\,\,\,\,\,g(X, \xi)=\eta(X),\\
& &\label{1.9}  g(\phi X,\phi Y)=g(X,Y)+ \eta(X)\eta(Y),\\
& &\label{1.10}  (\nabla_{X}\phi)Y=\alpha \{g(X, Y)\xi +2\eta(X)\eta(Y)\xi+ \eta(Y)X\},\\
& &\label{1.11}  \eta(R(X, Y)Z)=(\alpha^2-\rho)\{g(Y, Z)\eta(X) - g(X, Z)\eta(Y)\},\\
& &\label{1.12}  R(X, Y )\xi = (\alpha^2-\rho) \{\eta(Y) X-\eta(X) Y\},\\
& &\label{1.13}  R(\xi, X )Y = (\alpha^2-\rho) \{g(X, Y)\xi-\eta(Y) X\},\\
& &\label{1.14}  S(X, \xi)=(n-1)(\alpha^2-\rho)\eta(X).
\end{eqnarray}
%%%%%%%%%%%%%%%%%%%%%%%%%%%%%%%%%%%%%%%%%%%%%%%%%%%%5
\section{Semi-generalized recurrent $(LCS)_{n}$-manifold}
\begin{defi}
\par An $(LCS)_{n}$-manifold $(M^n, g)$ is said to be semi-generalized recurrent manifold if
\begin{equation}
\label{sgr2} (\nabla_{X}R)(Y, Z)W=A(X)R(Y, Z)W+ B(X)g(Z, W)Y,
\end{equation}
holds.
\end{defi}
%%%%%%%%%%%%%%%%%%%%%%%%%%%%%%%%%%%%%%%%%%%%%%%%%%%%
\begin{thm}
In a semi-generalized recurrent $(LCS)_{n}$-manifold the scalar curvature $r$ is given by
\begin{equation}
 r = \frac{1}{A(\xi)}\{ 2(n-1)(\alpha^{2}-\rho) \eta(\rho_{1})-(n^{2}+2) B(\xi) \}. \label{sgr3}
\end{equation}
\end{thm}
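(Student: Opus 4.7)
The plan is to apply the second Bianchi identity to (\ref{sgr2}) and then perform two successive contractions. Substituting (\ref{sgr2}) into
\[(\nabla_{X}R)(Y,Z)W + (\nabla_{Y}R)(Z,X)W + (\nabla_{Z}R)(X,Y)W = 0\]
replaces every $\nabla R$ with undifferentiated curvature plus a $Bg$-type term, yielding
\begin{align*}
& A(X) R(Y,Z)W + A(Y) R(Z,X)W + A(Z) R(X,Y)W \\
& \quad + B(X) g(Z,W) Y + B(Y) g(X,W) Z + B(Z) g(Y,W) X = 0.
\end{align*}

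First I would contract in $Y$ against an orthonormal frame $\{e_i\}$ of the Lorentzian metric, i.e. put $Y = e_i$, take $g(\cdot, e_i)$, and sum with signs $\epsilon_i$. Using $\rho_1 = \sum_i \epsilon_i A(e_i) e_i$, the Ricci definition $S(Y,Z) = \sum_i \epsilon_i g(R(e_i, Y) Z, e_i)$, the antisymmetry $R(Z,X) = -R(X,Z)$, and $\sum_i \epsilon_i g(e_i, e_i) = n$, the three curvature terms collapse to $A(X) S(Z,W) - A(Z) S(X, W) - g(R(X,Z) W, \rho_1)$ and the three $B$-terms collapse to $n B(X) g(Z,W) + 2 B(Z) g(X,W)$.

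Second, I would contract once more over $Z = W = e_j$, summing with $\epsilon_j$. Here $\sum \epsilon_j S(e_j, e_j) = r$ produces the scalar curvature, $\sum \epsilon_j A(e_j) S(X, e_j) = S(X, \rho_1)$ gives one Ricci-$\rho_1$ contribution, and a pair-symmetry argument $g(R(X, e_j) e_j, \rho_1) = g(R(e_j, \rho_1) X, e_j)$ identifies $\sum_j \epsilon_j g(R(X, e_j) e_j, \rho_1) = S(X, \rho_1)$, producing a second copy. The Ricci contributions add to $-2 S(X, \rho_1)$ and the $B$-contributions assemble into $(n^2 + 2) B(X)$, leaving the clean identity
\[A(X) r = 2 S(X, \rho_1) - (n^2 + 2) B(X).\]
Setting $X = \xi$ and using (\ref{1.14}) to evaluate $S(\xi, \rho_1) = (n-1)(\alpha^2 - \rho) \eta(\rho_1)$, then dividing by $A(\xi)$, produces the claimed formula.

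The main obstacle is the book-keeping of the Lorentzian double contraction, in particular recognising via the pair symmetry of $R$ that $\sum_j \epsilon_j g(R(X, e_j) e_j, \rho_1) = S(X, \rho_1)$, so that the two Ricci terms combine into $2 S(X, \rho_1)$ with the right sign, and collecting the three separate $B$-contributions into the single coefficient $n^2 + 2 = n\cdot n + 2\cdot 1$.
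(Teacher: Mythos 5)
Your proposal is correct and follows essentially the same route as the paper: apply the (second) Bianchi identity to the defining relation, contract twice to obtain $rA(X)+(n^{2}+2)B(X)-2S(X,\rho_{1})=0$, then set $X=\xi$ and use (\ref{1.14}). The only cosmetic discrepancy is that the paper labels the cyclic identity for $\nabla R$ as the ``first'' Bianchi identity, whereas you correctly call it the second; the computation itself is identical.
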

%%%%%%%%%%%%%%%%%%%%%%%%%%%%%%%%%%%%%55
\begin{proof}
Let us consider a semi-generalized recurrent $(LCS)_{n}$-manifold $(M^n, g)$.
\par Taking cyclic sum of (\ref{sgr2}) with respect to $X, Y, Z$ and making use of first Bianchi's identity, we have
\begin{eqnarray}
 \nonumber A(X)R(Y, Z)W + B(X)g(Z, W)Y + A(Y)R(Z, X)W \\
\label{sgr4}    + B(Y)g(X, W)Z + A(Z)R(X, Y)W + B(Z)g(Y, W)X=0.
\end{eqnarray}
%%%%%%%%%%%%%%%%%%%%%%%%%%%%%%%%%
On contraction, (\ref{sgr4}) leads to
\begin{eqnarray}
\nonumber A(X)S(Z, W) + n B(X)g(Z, W) + A(R(Z, X)W) \\
\label{sgr5}    + B(Z)g(X, W) - A(Z)S(X, W) + B(Z)g(X, W)=0.
\end{eqnarray}
%%%%%%%%%%%%%%%%%%%%%%%%%%%%%%%%%%55
Again contracting the above equation, we get
\begin{equation}
 r A(X) + (n^{2}+2) B(X) - 2S(X, \rho_{1}) =0. \label{sgr6}
\end{equation}
%%%%%%%%%%%%%%%%%%%%%%%%%%%%%%%%%%%%%
On substituting $X=\xi$ in (\ref{sgr6}), one can obtain the desired result.
\end{proof}
%%%%%%%%%%%%%%%%%%%%%%%%%%%%%%%%
Now from (\ref{sgr2}), we have
\begin{equation}
\label{sgr7} (\nabla_{X}R)(Y, Z, W, U) = A(X)g(R(Y, Z)W, U) + B(X)g(Z, W)g(Y, U).
\end{equation}
%%%%%%%%%%%%%%%%%%%%%%%%%%%%%
On contracting (\ref{sgr7}) twice and taking non zero constant scalar curvature, we get
\begin{equation}
\label{sgr8} A(X)=-\frac{n^{2}}{r} B(X).
\end{equation}
%%%%%%%%%%%%%%%%%%%%%%%%%%%%%%%%%%
Thus, we have the following assertion;
\begin{thm}
In a semi-generalized recurrent $(LCS)_{n}$-manifold $(M^{n}, g)$  the associated 1-forms are in opposite direction provided scalar curvature is positive constant.
\end{thm}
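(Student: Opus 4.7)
The plan is to derive the identity $rA(X) + n^{2}B(X) = 0$ by a standard double contraction of the semi-generalized recurrent condition, and then interpret it under the hypothesis $r > 0$ = constant.

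First I would rewrite the defining relation (\ref{sgr2}) in fully covariant $(0,4)$ form by pairing with a fourth vector $U$, exactly as in (\ref{sgr7}):
\begin{equation*}
(\nabla_{X}R)(Y,Z,W,U) = A(X)\, g(R(Y,Z)W,U) + B(X)\, g(Z,W)\, g(Y,U).
\end{equation*}
This display-form is convenient because both sides are tensors in $(Y,Z,W,U)$ and can be traced in any two slots using the metric.

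Next I would contract once over the pair $(Y,U)$ with an orthonormal frame $\{e_i\}$. On the left, $\sum_i g((\nabla_X R)(e_i,Z)W,e_i)$ is $(\nabla_X S)(Z,W)$ by the usual trace definition of the Ricci tensor, and $\nabla$ commutes with contraction. On the right, the first term gives $A(X)\,S(Z,W)$, and the second term produces $B(X)\,g(Z,W)\sum_i g(e_i,e_i) = n\,B(X)\,g(Z,W)$, reproducing the semi-generalized Ricci-recurrent form (\ref{I3}). I would then contract a second time over $(Z,W)$: the left side becomes $X(r) = dr(X)$, while on the right we obtain $r\,A(X) + n^{2}\,B(X)$. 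This is exactly the route the author sketches just before (\ref{sgr8}).

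Finally I would impose the hypothesis that $r$ is a (positive) constant, so $dr(X)=0$ for every $X$, yielding
\begin{equation*}
A(X) = -\,\frac{n^{2}}{r}\,B(X)
\end{equation*}
for all vector fields $X$. Using (\ref{I2}), this reads $g(X,\rho_{1}) = -(n^{2}/r)\,g(X,\rho_{2})$, so $\rho_{1} = -(n^{2}/r)\,\rho_{2}$; since $r > 0$ the scalar factor is strictly negative, meaning $\rho_{1}$ and $\rho_{2}$ point in opposite directions, which is the geometric statement of the theorem.

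There is no real obstacle: the only care needed is bookkeeping for the Lorentzian signature when summing $\sum_i g(e_i,e_i)$, but the paper has already fixed the convention (the factor $n$ appearing in (\ref{I3}) and in the target equation (\ref{sgr8})), so one simply follows that convention consistently through the two contractions. The argument uses nothing beyond the defining identity (\ref{sgr2}), the definition of the Ricci tensor and scalar curvature, and the constancy of $r$.
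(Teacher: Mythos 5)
Your proposal is correct and follows exactly the paper's own route: pass to the $(0,4)$-form (\ref{sgr7}), contract twice to obtain $dr(X)=rA(X)+n^{2}B(X)$, and use constancy of $r$ to get (\ref{sgr8}), whence $\rho_{1}=-(n^{2}/r)\rho_{2}$ with a negative factor when $r>0$. Your remark that the signed trace $\sum_i \varepsilon_i g(e_i,e_i)=n$ is unaffected by the Lorentzian signature is a correct and worthwhile clarification of a detail the paper leaves implicit.
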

%%%%%%%%%%%%%%%%%%%%%%%%%%%%%%%%%%%%%%%%%%%%%%%%%%%%%%%%%%%%%%%%%%
\section{Semi-generalized $\phi$-recurrent $(LCS)_{n}$-manifold}
\begin{defi}
\par An $(LCS)_{n}$-manifold $(M^n, g)$ satisfying the condition
\begin{equation}
\label{sgpr1} \phi^{2}((\nabla_{W}R)(X, Y)Z)=A(W)R(X, Y)Z + B(W)g(Y, Z)X,
\end{equation}
is called a semi-generalized $\phi$-recurrent manifold.
\end{defi}
%%%%%%%%%%%%%%%%%%%%%%%%%%%%%%%%%%%%%%%%
Using (\ref{1.7}) and then taking inner product with $U$ in (\ref{sgpr1}), we obtain
\begin{eqnarray}
 \nonumber g((\nabla_{W}R)(X, Y)Z, U) + \eta((\nabla_{W}R)(X, Y)Z)\eta(U)=A(W) g(R(X, Y)Z, U)\\
\label{sgpr2}+ B(W)g(Y, Z)g(X, U).
\end{eqnarray}
%%%%%%%%%%%%%%%%%%%%%%%%%%%%%%%%
Contraction of the above equation over $X$ and $U$, reduces to
\begin{equation}
\label{sgpr3} (\nabla_{W}S)(Y, Z) + \eta((\nabla_{W}R)(\xi, Y)Z)=A(W) S(Y, Z)+ n B(W)g(Y, Z).
\end{equation}
%%%%%%%%%%%%%%%%%%%%%%%%%%%%%%%%%
In an $n$-dimensional $(LCS)_{n}$-manifold, we can easily see that
\begin{equation}
\label{sgpr4} g((\nabla_{W}R)(\xi, Y)Z, \xi) = -(2\alpha \rho-\beta)\{g(Y, Z)+ \eta(Y) \eta(Z) \}\eta(W).
\end{equation}
%%%%%%%%%%%%%%%%%%%%%%%%%%%%%%%%%%%%%
By virtue of (\ref{sgpr4}), (\ref{sgpr3}) yields
\begin{equation}
\label{sgpr5} (\nabla_{W}S)(Y, Z) = (2\alpha \rho-\beta)\{g(Y, Z)+ \eta(Y) \eta(Z) \}\eta(W) + A(W) S(Y, Z)+ n B(W)g(Y, Z).
\end{equation}
%%%%%%%%%%%%%%%%%%%%%%%%%%%%%%%%%%%55
Taking an account of (\ref{1.5}), the covariant derivative of (\ref{1.14}) can be written as
\begin{equation}
\label{sgrr4} (\nabla_{X}S)(Y, \xi)=(n-1)[ (2\alpha \rho-\beta)\eta(X)\eta(Y)+(\alpha^{2}-\rho)\alpha g(X, Y)]-\alpha S(X, Y).
\end{equation}
%%%%%%%%%%%%%%%%%%%%%%%%%%%%%%%%%%%%%%%%%%%%%
Setting $Z=\xi$ in (\ref{sgpr5}) and making use of (\ref{sgrr4}), one can arrive at
\begin{eqnarray}
 \nonumber S(Y, W) &=& - \frac{\eta(Y)}{\alpha}[(n-1)(\alpha^{2}-\rho) A(W) + n B(W)]\\
\label{sgpr6}  &+& \frac{(n-1)}{\alpha}[(2 \alpha \rho-\beta)\eta(Y)\eta(W) + (\alpha^{2}- \rho)\alpha g(Y, W)].
\end{eqnarray}
%%%%%%%%%%%%%%%%%%%%%%%%%%%%%%%%%%%%%%%%%5
Replacing $Y$ by $\phi Y$ in (\ref{sgpr6}) gives
\begin{equation}
\label{sgpr7} S(Y, W) = (n-1)(\alpha^{2}- \rho) g(Y, W).
\end{equation}
%%%%%%%%%%%%%%%%%%%%%%%%%%%%%%%
Hence, we can state the following;
\begin{thm}
A semi-generalized $\phi$-recurrent $(LCS)_{n}$-manifold $(M^{n}, g)$ is an Einstein manifold.
\end{thm}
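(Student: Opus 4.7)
The plan is to contract the defining identity (\ref{sgpr1}) to produce a Ricci-type relation, then specialize to the characteristic vector $\xi$ in order to isolate the Ricci tensor pointwise. First, I would apply $\phi^{2}=I+\eta\circ\xi$ from (\ref{1.7}) to the left-hand side of (\ref{sgpr1}) and take the $g$-inner product with an arbitrary vector $U$. This yields an equation linking $g((\nabla_{W}R)(X,Y)Z,U)$ and $\eta((\nabla_{W}R)(X,Y)Z)\eta(U)$ to the prescribed right-hand side $A(W)g(R(X,Y)Z,U)+B(W)g(Y,Z)g(X,U)$.

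Next I would contract over $X$ and $U$: the principal term becomes $(\nabla_{W}S)(Y,Z)$, the right-hand side contributes $A(W)S(Y,Z)+nB(W)g(Y,Z)$, and the residual term is $\eta((\nabla_{W}R)(\xi,Y)Z)$. Evaluating this residual is the technical core of the argument; using (\ref{1.11})--(\ref{1.14}) and the derivative identities (\ref{1.3})--(\ref{1.5}), one differentiates the expression $\eta(R(\xi,Y)Z)$ and commutes $\nabla_{W}$ past the contraction with $\xi$ to produce a closed-form multiple of $\{g(Y,Z)+\eta(Y)\eta(Z)\}\eta(W)$. Substituting back gives a pointwise formula for $(\nabla_{W}S)(Y,Z)$.

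To convert this evolution equation into an algebraic identity on $S$, I would set $Z=\xi$ and compare with the known expression for $(\nabla_{X}S)(Y,\xi)$ obtained by covariantly differentiating (\ref{1.14}). The resulting linear equation can be solved for $S(Y,W)$, producing a clean $g(Y,W)$ piece plus several spurious terms that couple $\eta(Y)$, $\eta(W)$, $A(W)$, and $B(W)$. The concluding move is to replace $Y$ by $\phi Y$: since $\eta(\phi Y)=0$ by (\ref{1.8}) and $\phi^{2}$ acts as $I+\eta\circ\xi$, every term carrying an $\eta(Y)$ factor is annihilated, leaving exactly $S(Y,W)=(n-1)(\alpha^{2}-\rho)g(Y,W)$, which is the Einstein condition.

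The main obstacle I anticipate is the careful bookkeeping needed to evaluate $g((\nabla_{W}R)(\xi,Y)Z,\xi)$ and to track the auxiliary scalar (denoted $\beta$ in the excerpt, evidently $\xi\rho$ or a related derivative of $\rho$) that arises when the curvature identities are differentiated. Once this computation is under control, both the extraction of $S(Y,W)$ from the $Z=\xi$ specialization and the final $\phi$-substitution are essentially formal; the nontrivial content lies in verifying that the $\beta$-terms and the $A(W)$, $B(W)$ contributions cancel under $Y\mapsto\phi Y$, which is what forces the Einstein conclusion regardless of the recurrence data.
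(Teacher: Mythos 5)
Your proposal follows the paper's argument step for step: apply $\phi^{2}=I+\eta\circ\xi$ and contract over $X$ and $U$ to reach $(\nabla_{W}S)(Y,Z)+\eta((\nabla_{W}R)(\xi,Y)Z)=A(W)S(Y,Z)+nB(W)g(Y,Z)$, evaluate the residual term as $-(2\alpha\rho-\beta)\{g(Y,Z)+\eta(Y)\eta(Z)\}\eta(W)$, set $Z=\xi$ against the covariant derivative of (\ref{1.14}), solve for $S(Y,W)$, and kill the $\eta$-terms by replacing $Y$ with $\phi Y$. This is exactly the paper's proof, and your identification of the evaluation of $g((\nabla_{W}R)(\xi,Y)Z,\xi)$ as the technical core is accurate.
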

%%%%%%%%%%%%%%%%%%%%%%%%%%%%%%%%%
\section{Semi-generalized Ricci recurrent $(LCS)_{n}$-manifold}
\begin{defi}
\par A $(LCS)_{n}$-manifold $(M^n, g)$ is said to be semi-generalized Ricci recurrent manifold if
\begin{equation}
\label{sgrr2} (\nabla_{X}S)(Y, Z)=A(X)S(Y, Z)+n B(X)g(Y, Z),
\end{equation}
holds.
\end{defi}
%%%%%%%%%%%%%%%%%%%%%%%%%%%%%%%%%%%%%%%%%%%%%%
Suppose that $(LCS)_{n}$-manifold is semi-generalized Ricci recurrent. Then from (\ref{sgrr2}) we have
\begin{equation}
\label{sgrr3} (\nabla_{X}S)(Y, \xi)=A(X)S(Y, \xi)+n B(X)g(Y, \xi).
\end{equation}
%%%%%%%%%%%%%%%%%%%%%%%%%%%%%%%%%%5555
By virtue of (\ref{sgrr4}) and (\ref{sgrr3}), we have
\begin{equation}
\label{sgrr5} \alpha S(X, Y)=(n-1)[ (2\alpha \rho-\beta)\eta(X)\eta(Y)+(\alpha^{2}-\rho)\alpha g(X, Y) - (\alpha^{2}-\rho)A(X)\eta(Y)]-n B(X)\eta(Y).
\end{equation}
%%%%%%%%%%%%%%%%%%%%%%%%%%%%%%%%%%%%%%
Replacing $Y$ by $\phi Y$ in (\ref{sgrr5}) and using (\ref{1.14}), we have
\begin{equation}
\label{sgrr6}  S(X, Y)=(n-1)(\alpha^{2}-\rho) g(X, Y).
\end{equation}
If the manifold under consideration is Einstein, then (\ref{sgrr6}) implies $\alpha^{2}-\rho=$
constant and hence $2\alpha \rho- \beta= 0$: Conversely, if $2\alpha \rho- \beta= 0$, then $\nabla_{X}(\alpha^{2}-\rho)=0$: Consequently $\alpha^{2}-\rho=$ constant.
\par Thus we state the following:
\begin{thm}
A semi-generalized Ricci recurrent $(LCS)_{n}$-manifold is Einstein if and only if $\beta=2\alpha \rho$.
\end{thm}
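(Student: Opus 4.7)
The plan is to leverage equation~(\ref{sgrr6}), which the preceding computation has already established: $S(X,Y)=(n-1)(\alpha^2-\rho)\,g(X,Y)$. This already has the formal shape of an Einstein relation, but the coefficient $(n-1)(\alpha^2-\rho)$ enters as a scalar function on $M$ rather than a fixed constant. Consequently, the manifold is Einstein in the usual sense if and only if $\alpha^2-\rho$ is constant on $M$, and this reformulation is what the theorem really asks us to characterize in terms of $\beta$.

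The main step will be to express $\nabla_X(\alpha^2-\rho)$ in terms of $\alpha$, $\rho$ and $\beta$. By (\ref{1.4}) one has $\nabla_X\alpha=\rho\,\eta(X)$, and together with the relation $\nabla_X\rho=\beta\,\eta(X)$ (the implicit defining relation for the scalar $\beta$ already used in deriving (\ref{sgrr4}) and (\ref{sgrr5})), a direct differentiation yields
\[
\nabla_X(\alpha^2-\rho)\;=\;2\alpha\,\nabla_X\alpha-\nabla_X\rho\;=\;(2\alpha\rho-\beta)\,\eta(X).
\]
Since $\eta$ is a non-vanishing $1$-form, this identity immediately shows that $\alpha^2-\rho$ is constant on $M$ if and only if $2\alpha\rho-\beta=0$.

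Combining the two observations gives both directions of the equivalence at once. If the manifold is Einstein, (\ref{sgrr6}) forces $\alpha^2-\rho$ to be constant, so the displayed identity gives $\beta=2\alpha\rho$. Conversely, if $\beta=2\alpha\rho$, then $\nabla_X(\alpha^2-\rho)=0$, hence $\alpha^2-\rho$ is a constant and (\ref{sgrr6}) becomes the genuine Einstein condition $S=(n-1)(\alpha^2-\rho)\,g$.

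I do not foresee a serious obstacle: the substantive work lies in the derivation of (\ref{sgrr6}) via the semi-generalized Ricci recurrent hypothesis, which is already in place. The only point meriting care is making explicit that $\beta$ is defined by $\nabla_X\rho=\beta\,\eta(X)$, since the manuscript introduces $\beta$ only through the combination $2\alpha\rho-\beta$; once this convention is stated, the theorem reduces to the one-line differentiation above.
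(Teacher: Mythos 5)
Your proposal is correct and follows essentially the same route as the paper: starting from (\ref{sgrr6}), the Einstein condition is equivalent to the constancy of $\alpha^{2}-\rho$, and differentiating via $\nabla_{X}\alpha=\rho\,\eta(X)$ and $\nabla_{X}\rho=\beta\,\eta(X)$ gives $\nabla_{X}(\alpha^{2}-\rho)=(2\alpha\rho-\beta)\eta(X)$, which yields both directions. Your explicit remark that $\beta$ is implicitly defined by $\nabla_{X}\rho=\beta\,\eta(X)$ is a helpful clarification the paper leaves tacit.
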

%%%%%%%%%%%%%%%%%%%%%%%%%%%%%%%%
\noindent{\bf Example 5.1} We consider a 3-dimensional manifold $M = \{(x, y, z)\in R^3\}$, where $(x, y, z)$
are the standard coordinate in $R^3$. Let $\{E_{1}, E_{2}, E_{3}\}$ be linearly independent global frame field on $M$ given by
\begin{equation}
\nonumber E_{1}= z(x\frac{\partial}{\partial{x}}+y\frac{\partial}{\partial{y}}), \,\,\,\,\,\,\,\,\,\,\, E_{2}= z\frac{\partial}{\partial{y}}, \,\,\,\,\,\,\,\,\,\, E_{3}= \frac{\partial}{\partial{z}}.
\end{equation}
%%%%%%%%%%%%%%%%%%%%%%%%%%%%%%%%%%%%%%%%%
\par Let $g$ be the Lorentzian metric defined by
\begin{eqnarray}
\nonumber g(E_{1}, E_{2})=g(E_{1}, E_{3})=g(E_{2}, E_{3})= 0,\\
\nonumber g(E_{1}, E_{1})=g(E_{2}, E_{2})=1, g(E_{3}, E_{3})= -1.
\end{eqnarray}
%%%%%%%%%%%%%%%%%%%%%%%%%%%%%%%%%%%%
Let $\eta$ be the 1-form defined by $\eta(Z)=g(Z, e_{3})$ for any vector field $Z\in \chi(M)$.
\par Let $\phi$ be the (1, 1)-tensor field defined by
\begin{equation}
\nonumber \phi (E_{1})= E_{1}, \,\,\,\,\,\, \phi (E_{2})= E_{2}, \,\,\,\,\, \phi (E_{3})= 0.
\end{equation}
%%%%%%%%%%%%%%%%%%%%%%%%%%%%%%%%%%%%
Using the linearity of $\phi$ and $g$, we have
\begin{eqnarray}
\nonumber & & \eta(E_{3})=-1, \\
\nonumber & & \phi^{2}U=U + \eta(U)E_{3}, \\
\nonumber & & g(\phi U, \phi V)=g(U, V)+\eta(U) \eta(V),
\end{eqnarray}
for any $U, V \in \chi(M)$. Thus for $E_{3}=\xi$, $(\phi, \xi, \eta, g)$ defines a Lorentzian paracontact
structure on $M$.\\
%%%%%%%%%%%%%%%%%%%%%%%%%%%
\par Now we have
\begin{equation}
\nonumber [E_{1}, E_{2}]=-zE_{2}, \,\,\,\,\,\, [E_{1}, E_{3}]=-\frac{1}{z} E_{1}, \,\,\,\,\, [E_{2}, E_{3}]=-\frac{1}{z} E_{2}.
\end{equation}
%%%%%%%%%%%%%%%%%%%%%%%%%%%%%%%%%%%
Let $\nabla$ be the Levi-Civita connection with respect to the Lorentzian metric $g$. Using Koszul formula for the Lorentzian metric $g$, we can easily
calculate
\begin{eqnarray}
\nonumber & & \nabla_{E_{1}}E_{1}=-\frac{1}{z} E_{3} , \,\,\,\,\,\,\,\,\, \nabla_{E_{2}}E_{1}=zE_{2} , \,\,\,\,\,\,\,\,\,\,\,\,\,\,\,\,\, \nabla_{E_{3}}E_{1}=0,  \\
\nonumber & &  \nabla_{E_{1}}E_{2}= 0, \,\,\,\,\,\,\,\,\,\,\,\,\,\,\,\,\,\,\, \nabla_{E_{2}}E_{2}= -\frac{1}{z}E_{3}-zE_{1}, \,\,\,\,\,\,\,\,\, \nabla_{E_{3}}E_{2}=0, \\
\nonumber & & \nabla_{E_{1}}E_{3}= -\frac{1}{z}E_{1}, \,\,\,\,\,\,\,\,\,\,\,\,\,\,\, \nabla_{E_{2}}E_{3}= -\frac{1}{z}E_{2} , \,\,\,\,\,\,\,\,\,\,\,\,\,\, \nabla_{E_{3}}E_{3}=0 .
\end{eqnarray}
%%%%%%%%%%%%%%%%%%%%%%%%%%%%%%%%%%%%
From the above, it can be easily seen that $(\phi, \xi, \eta, g)$ is an $(LCS)_{3}$ structure on $M$.
Consequently $M^{3}(\phi, \xi, \eta, g)$ is an $(LCS)_3$-manifold with $\alpha =-\frac{1}{z}\neq 0$ and $\rho =-\frac{1}{z^2}$.

From the above relations, one can have the non-vanishing components of the curvature tensor as
\begin{eqnarray}
\nonumber & & R(E_{2}, E_{3})E_{3}=-\frac{2}{z^2}E_{2}, \,\,\,\,\,\,\,\,\,\,\,\,\,\,\,\, R(E_{1}, E_{3})E_{3}=-\frac{2}{z^2}E_{1}, \,\,\,\,\,\,\,\,\,\,\,\,\,\,\,\,\,\, R(E_{1}, E_{2})E_{2}=\frac{1}{z^2}E_{1}-z^{2}E_{1}, \\
\nonumber & & R(E_{2}, E_{3})E_{2}=-\frac{2}{z^2}E_{3}, \,\,\,\,\,\,\,\,\,\,\,\,\,\, R(E_{1}, E_{2})E_{1} =z^{2}E_{2}-\frac{1}{z^2}E_{2}, \,\,\,\,\, R(E_{1}, E_{3})E_{1} =-\frac{2}{z^2}E_{3}.
\end{eqnarray}
%%%%%%%%%%%%%%%%%%%%%%%%%%%%%%%%%%%%
Using this, we find the values of the Ricci tensor as follows:
\begin{equation}
\nonumber S(E_{1}, E_{1}) = -(z^2+\frac{1}{z^2}), \,\,\,\,\,\,\,\,\,\, S(E_{2}, E_{2}) =-(z^2+\frac{1}{z^2})  , \,\,\,\,\,\,\,\,\,\, S(E_{3}, E_{3}) =-\frac{4}{z^2}.
\end{equation}
%%%%%%%%%%%%%%%%%%%%%%%%%%%%%%%%%%%%%55
Since ${E_{1}, E_{2}, E_{3}}$ forms a basis, any vector field $X, Y, Z \in \chi(M)$ can be
written as \\ $X = a_{1}E_{1} + b_{1}E_{2} + c_{1}E_{3}$, $Y = a_{2}E_{1} + b_{2}E_{2} + c_{2}E_{3}$, where $a_{i}, b_{i}, c_{i} \in R^{+}$ (the set of all positive real numbers), $i = 1, 2$. This
implies that
\begin{equation}
\nonumber S(X, Y) = -(a_{1}a_{2}+b_{1}b_{2})(z^2+\frac{1}{z^2})-c_{1}c_{2}\frac{4}{z^2} \,\,\,\,\,\,\,\,\,\,\,\, and\,\,\,\,\,\,\,\,\,\,\, g(X, Y)=a_{1}a_{2} + b_{1}b_{2} - c_{1}c_{2}.
\end{equation}
%%%%%%%%%%%%%%%%%%%%%%%%%%%%%%%%%%%%%%%%%5
By virtue of above, we have the following:
\begin{eqnarray}
\nonumber & & (\nabla_{E_{1}}S)(X, Y)=-(a_{1}c_{2}+c_{1}a_{2})(z+\frac{5}{z^3}),\\
\nonumber & & (\nabla_{E_{2}}S)(X, Y)=-(b_{1}c_{2}+c_{1}b_{2})(z+\frac{5}{z^3}),\\
\nonumber & & (\nabla_{E_{3}}S)(X, Y)=0.
\end{eqnarray}
%%%%%%%%%%%%%%%%%%%%%%%%%%%%%%%%%%%%%%5
This means that manifold under the consideration is not Ricci symmetric. Let us now consider the 1-forms
\begin{eqnarray}
\nonumber & & A(E_{1})=\frac{(a_{1}c_{2}+c_{1}a_{2})}{z(a_{1}a_{2}+b_{1}b_{2})}, \,\,\,\,\,\,\,\,\, B(E_{1})=\frac{-4(a_{1}c_{2}+c_{1}a_{2})}{3z^{3}(a_{1}a_{2}+b_{1}b_{2})},\\
\nonumber & & A(E_{2})= \frac{(b_{1}c_{2}+c_{1}b_{2})}{z(a_{1}a_{2}+b_{1}b_{2})}, \,\,\,\,\,\,\,\, B(E_{2})= \frac{-4(b_{1}c_{2}+c_{1}b_{2})}{3z^{3}(a_{1}a_{2}+b_{1}b_{2})}, \\
\nonumber & & A(E_{3})= 0, \,\,\,\,\,\,\,\, B(E_{3})= 0,
\end{eqnarray}
at any point $X\in M$. From (\ref{sgrr2}) we have
\begin{equation}
(\nabla_{E_{i}}S)(Y, Z) = A(E_{i})S(Y, Z) + 3B(E_{i}) g(Y, Z), \,\,\,\,\,\,\,\,\,\,\,\, i=1, 2, 3.\label{6.1}
\end{equation}
%%%%%%%%%%%%%%%%%%%%%%%%%%%%%%%%%
It can be easily shown that the manifold with the above 1-forms satisfies the relation (\ref{6.1}). Hence the manifold under consideration is a semi generalized Ricci recurrent $(LCS)_{3}$-manifold.
%%%%%%%%%%%%%%%%%%%%%%%%%%%%%%%%%%%%%%%%%%%%%%%%%%%
\section{Conformal Ricci Solitons on $(LCS)_{n}$-manifold}
\par In 2004, Fischer \cite{AEFischer} developed conformal Ricci flow.  In a classical Ricci flow equation, unit volume constraint plays an important role but in conformal Ricci flow equation scalar curvature is considered. This Ricci flow equations are called conformal Ricci flow equations because of the role that conformal geometry plays in constraining the scalar curvature and these equations are the vector field sums of a
conformal flow equation and a Ricci flow equation. These new equations are given by
\begin{equation}
\begin{split}
\label{6.2}
\frac{\partial g}{\partial t}+2(S+\frac{g}{n})=-pg,\\
R(g)=-1,
\end{split}
\end{equation}
%%%%%%%%%%%%%%%%%%%%%%%%%%%%%%%%%%%%%%%555
where $R(g)$ is the scalar curvature of the manifold and $p\geq0$ is a non-dynamical (time-dependent parameter) scalar field, which is also called as conformal pressure and $n$ is the dimension of manifold. The conformal Ricci flow equations are analogous to the Navier-Stokes equations of fluid mechanics (i.e., for incompressible fluid flow) \cite{GCPMDDMMR}. At equilibrium conformal pressure is zero and strictly positive otherwise. And it serves as a Lagrange multiplier to conformally deform the metric flow so as to maintain the scalar curvature constraint.

Basu and Bhattacharyya (\cite{NBAB1}, \cite{NBAB2}), introduced the conformal Ricci soliton equation as
\begin{equation}
\mathscr{L}_{V}g+ 2S_{g} = [2\lambda-(p+\frac{2}{n})] g. \label{6.3}
\end{equation}
This equation is the generalization of the Ricci soliton equation and it also satisfies the
conformal Ricci flow equation.\\
Setting $V=\xi$ in (\ref{6.3}) and using (\ref{1.5}), (\ref{1.6}) and (\ref{1.8}), one can get
\begin{equation}
S(X, Y)=kg(X, Y)-\alpha \eta(X)\eta(Y), \label{6.4}
\end{equation}
where $k=\lambda-(\frac{p}{2}+\frac{1}{n})-\alpha$.

\par Contraction of the above equation gives the scalar $\lambda$ under the conformal Ricci soliton as
\begin{equation}
\lambda=\frac{p}{2}+ \frac{(n+1)}{n}\alpha. \label{6.5}
\end{equation}
%%%%%%%%%%%%%%%%%%%%%%%%%%%%
Hence we have;
\begin{thm}
A conformal Ricci soliton $(LCS)_{n}$-manifold is an $\eta$-Einstein manifold and the scalar $\lambda$ is given by (\ref{6.5}).
\end{thm}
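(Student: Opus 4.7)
The plan is to substitute $V=\xi$ into the conformal Ricci soliton equation (\ref{6.3}) and read off the Ricci tensor. First I would expand the Lie derivative as $(\mathscr{L}_\xi g)(X,Y)=g(\nabla_X\xi,Y)+g(\nabla_Y\xi,X)$; by (\ref{1.5}) and (\ref{1.6}), $\nabla_X\xi=\alpha\phi X=\alpha[X+\eta(X)\xi]$, and together with $g(X,\xi)=\eta(X)$ from (\ref{1.8}) this collapses to
\[
(\mathscr{L}_\xi g)(X,Y)=2\alpha\bigl[g(X,Y)+\eta(X)\eta(Y)\bigr].
\]
Plugging this into (\ref{6.3}) and isolating $S(X,Y)$ produces exactly (\ref{6.4}) with $k=\lambda-(p/2+1/n)-\alpha$, which is the $\eta$-Einstein form and settles the first half of the theorem.

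For the value of $\lambda$ in (\ref{6.5}), I would trace (\ref{6.4}) against $g$ over a pseudo-orthonormal frame. The $g$-term contributes $nk$, the Ricci side contributes $r$, and the $\eta\otimes\eta$ term contributes $-\alpha\,\operatorname{tr}_g(\eta\otimes\eta)$, whose sign is determined by $\xi$ being unit timelike via (\ref{1.1}) and $\eta(\xi)=-1$ from (\ref{1.8}). Invoking the conformal Ricci flow constraint $R(g)=-1$ built into (\ref{6.2}) then fixes $r=-1$, leaving a single linear equation in $\lambda$ that rearranges to $\lambda=\tfrac{p}{2}+\tfrac{n+1}{n}\alpha$.

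The only delicate point is keeping the Lorentzian signs straight when evaluating $\operatorname{tr}_g(\eta\otimes\eta)$ and handling $\eta(\xi)=-1$; mishandling them would shift the coefficient of $\alpha$ in (\ref{6.5}). Beyond that, the argument is purely substitutional, relying only on the structure identities already collected in Section~2 together with the soliton and flow equations (\ref{6.2})--(\ref{6.3}); no curvature manipulations or auxiliary lemmas are needed.
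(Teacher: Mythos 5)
Your argument is exactly the paper's: substitute $V=\xi$ into (\ref{6.3}), expand $\mathscr{L}_{\xi}g$ via $\nabla_{X}\xi=\alpha\phi X$ and (\ref{1.6}) to get $(\mathscr{L}_{\xi}g)(X,Y)=2\alpha\{g(X,Y)+\eta(X)\eta(Y)\}$, isolate $S$ to obtain (\ref{6.4}), then trace and impose the constraint $R(g)=-1$ from (\ref{6.2}). The $\eta$-Einstein half of the theorem is therefore correct and complete.

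However, the ``delicate point'' you flag is precisely where your stated conclusion does not come out. Since $\eta=g(\cdot,\xi)$ with $g(\xi,\xi)=-1$ by (\ref{1.1}), a pseudo-orthonormal trace gives $\operatorname{tr}_{g}(\eta\otimes\eta)=\eta(\xi)=-1$, so contracting (\ref{6.4}) yields $r=nk+\alpha$. Setting $r=-1$ and $k=\lambda-(\tfrac{p}{2}+\tfrac{1}{n})-\alpha$, this rearranges to $\lambda=\tfrac{p}{2}+\tfrac{n-1}{n}\alpha$, not $\tfrac{p}{2}+\tfrac{n+1}{n}\alpha$. Reproducing the coefficient in (\ref{6.5}) requires taking $\operatorname{tr}_{g}(\eta\otimes\eta)=+1$, i.e., treating $\xi$ as if it were spacelike; so either your trace (and the paper's) carries exactly the Lorentzian sign slip you warned about, or the coefficient of $\alpha$ in (\ref{6.5}) should read $\tfrac{n-1}{n}$. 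Apart from this one sign, the purely substitutional argument is sound and matches the paper's route step for step.
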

%%%%%%%%%%%%%%%%%%%%%%%%%%%%%%%%%%%%%
Now we consider conformal Ricci soliton in $(LCS)_{n}$-manifold satisfying $R(\xi, X)\cdot \tilde{M}=0$.\\
%%%%%%%%%%%%%%%%%%%%%%%%%%%%%%%%%%%%%%%%
\par Pokhariyal and Mishra \cite{GPPRSM} defined a tensor field $\tilde{M}$ on a Riemannian manifold as
\begin{equation}
\tilde{M}(X, Y )Z = R(X, Y )Z -\frac{1}{2(n-1)}[ S(Y, Z)X -S(X, Z)Y + g(Y, Z)QX - g(X, Z)QY ].
\end{equation}
Such a tensor field $\tilde{M}$ is known as $M$-projective curvature tensor and it bridges the gap between conformal curvature tensor, conharmonic curvature tensor and concircular curvature tensor on one side and $H$-projective curvature tensor on the other.

\par Suppose conformal Ricci soliton in $(LCS)_{n}$-manifold satisfies $R(\xi, X)\cdot \tilde{M}=0$. Which implies that
\begin{eqnarray}
\nonumber \eta(R(\xi, X)\tilde{M}(U, V)W) - \eta(\tilde{M}(R(\xi, X)U, V)W) \\
- \eta(\tilde{M}(U, R(\xi, X)V)W) - \eta(\tilde{M}(U, V)R(\xi, X)W)=0. \label{7.1}
\end{eqnarray}
%%%%%%%%%%%%%%%%%%%%%%%%%%%%%%%%
By virtue of (\ref{1.13}), (\ref{7.1}) turns in to
\begin{eqnarray}
\nonumber (\alpha^2-\rho)[g(X, \tilde{M}(U, V)W) + \eta(\tilde{M}(U, V)W)\eta(X) + g(X, U)\eta(\tilde{M}(\xi, V)W)\\
\nonumber - \eta(U)\eta(\tilde{M}(X, V)W)  + g(X, V)\eta(\tilde{M}(U, \xi)W)- \eta(V)\eta(\tilde{M}(U, X)W)\\
 + g(X, W)\eta(\tilde{M}(U, V)\xi)- \eta(W)\eta(\tilde{M}(U, V)X)]=0. \label{7.2}
\end{eqnarray}
%%%%%%%%%%%%%%%%%%%%%%%%%%%%%%
In view of (\ref{1.9}), (\ref{1.11}), (\ref{1.12}), (\ref{1.13}) and (\ref{6.4}), we have
\begin{eqnarray}
\label{7.3} \eta(\tilde{M}(X, Y)Z)&=&\{(\alpha^2-\rho)-\frac{k}{(n-1)}+\frac{\alpha}{2(n-1)}\}\{g(Y, Z)\eta(X)-g(X, Z)\eta(Y)\},\\
\label{7.4} \eta(\tilde{M}(\xi, Y)Z)&=&-\{(\alpha^2-\rho)-\frac{k}{(n-1)}+\frac{\alpha}{2(n-1)}\}\{g(Y, Z)+\eta(Y)\eta(Z)\},\\
\label{7.5} \eta(\tilde{M}(X, Y)\xi)&=&0.
\end{eqnarray}
%%%%%%%%%%%%%%%%%%%%%%%%%%%%%%%%%%%%%555
Suppose that $(\alpha^2-\rho)\neq 0$. Then the equation (\ref{7.2}) with the use of (\ref{7.3})-(\ref{7.5}) gives
\begin{eqnarray}
 \nonumber & & g(X, \tilde{M}(U, V)W) \\
 &+&\{(\alpha^2-\rho)-\frac{k}{(n-1)}+\frac{\alpha}{2(n-1)}\}\{ g(X, V)g(U, W) - g(X, U)g(V, W)\}=0.\label{7.7}
\end{eqnarray}
%%%%%%%%%%%%%%%%%%%%%%%%%%%%%%55
On contraction of the above equation over $X$ and $U$, one can get
\begin{equation}
S(V, W)=\frac{1}{n}[R(g)-2(\alpha^2-\rho)(n-1)^2+2k(n-1)-\alpha(n-1)]g(V, W).\label{7.8}
\end{equation}
%%%%%%%%%%%%%%%%%%%%%%%%%%%%%%%%%%
Thus, we arrive at the following assertion:
\begin{thm}
A conformal Ricci soliton in $(LCS)_{n}$-manifold satisfying $R(\xi, X)\cdot \tilde{M}=0$ is an Einstein manifold provided $(\alpha^2-\rho)\neq 0$.
\end{thm}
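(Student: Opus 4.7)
The plan is to unpack the curvature condition $R(\xi,X)\cdot\tilde M=0$ as a derivation acting on the $M$-projective tensor, contract with $\eta$ so that the known identities (\ref{1.11})--(\ref{1.13}) and the soliton Ricci relation (\ref{6.4}) can be brought to bear, and then contract once more to read off an Einstein type equation $S=\mu g$. Concretely, writing out $(R(\xi,X)\cdot\tilde M)(U,V)W=0$ gives
\[
R(\xi,X)\tilde M(U,V)W-\tilde M(R(\xi,X)U,V)W-\tilde M(U,R(\xi,X)V)W-\tilde M(U,V)R(\xi,X)W=0,
\]
and taking the $\eta$ inner product kills one derivative direction. Using (\ref{1.13}) to replace each occurrence of $R(\xi,\cdot)\,\cdot$ by its algebraic expression $(\alpha^{2}-\rho)\{g(\cdot,\cdot)\xi-\eta(\cdot)\,\cdot\}$ produces the identity (\ref{7.2}).

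Next I would compute the three scalar quantities $\eta(\tilde M(X,Y)Z)$, $\eta(\tilde M(\xi,Y)Z)$ and $\eta(\tilde M(X,Y)\xi)$ that appear after this substitution. From the defining formula
\[
\tilde M(X,Y)Z=R(X,Y)Z-\tfrac{1}{2(n-1)}\bigl[S(Y,Z)X-S(X,Z)Y+g(Y,Z)QX-g(X,Z)QY\bigr],
\]
one applies (\ref{1.11}) to the curvature piece and uses (\ref{6.4}) together with $Q\xi=k\xi+\alpha\xi$ (obtained from (\ref{6.4}) by raising an index and evaluating at $\xi$) to handle the Ricci operator terms. A direct calculation, combining the coefficients carefully, yields the three formulas (\ref{7.3})--(\ref{7.5}); in particular $\eta(\tilde M(X,Y)\xi)=0$, which eliminates one whole group of terms in (\ref{7.2}).

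Substituting (\ref{7.3})--(\ref{7.5}) into (\ref{7.2}) and exploiting the hypothesis $(\alpha^{2}-\rho)\neq 0$ to divide through, the remaining relation collapses to (\ref{7.7}), an algebraic identity for $g(X,\tilde M(U,V)W)$ in terms of the metric alone. Contracting over $X$ and $U$ (the $M$-projective tensor contracts to a Ricci-like object because of its construction, so the trace is routine) yields (\ref{7.8}), which is precisely a relation of the form $S(V,W)=\mu\, g(V,W)$ with
\[
\mu=\tfrac{1}{n}\bigl[R(g)-2(\alpha^{2}-\rho)(n-1)^{2}+2k(n-1)-\alpha(n-1)\bigr],
\]
establishing that the manifold is Einstein.

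The main obstacle is the bookkeeping in the second step: the $M$-projective tensor mixes $R$, $S$ and $Q$, and when we probe it with $\xi$ in various slots the soliton expression (\ref{6.4}) produces several cancellations that have to be tracked so that the coefficient $(\alpha^{2}-\rho)-\frac{k}{n-1}+\frac{\alpha}{2(n-1)}$ appears cleanly in all three auxiliary identities. Once these coefficients line up, everything else is formal; the nondegeneracy assumption $(\alpha^{2}-\rho)\neq 0$ is used only to remove the global prefactor in (\ref{7.2}) before the final contraction.
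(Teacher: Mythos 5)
Your proposal follows the paper's own argument step for step: expanding the derivation condition, contracting with $\eta$, substituting (\ref{1.13}) to obtain (\ref{7.2}), computing the auxiliary identities (\ref{7.3})--(\ref{7.5}) from the definition of $\tilde M$ together with (\ref{6.4}), dividing by $(\alpha^{2}-\rho)$ to reach (\ref{7.7}), and contracting over $X$ and $U$ to get the Einstein relation (\ref{7.8}). The computation $Q\xi=(k+\alpha)\xi$ and the identification of the common coefficient $(\alpha^{2}-\rho)-\frac{k}{n-1}+\frac{\alpha}{2(n-1)}$ are exactly what the paper uses, so this is the same proof.
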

%%%%%%%%%%%%%%%%%%%%%%%%%%%%%%%%%%%%
We now consider conformal Ricci soliton in $(LCS)_{n}$-manifold satisfying $C(\xi, X)\cdot S=0$.\\
%%%%%%%%%%%%%%%%%%%%%%%%%%%%%%%%%%%
\par An interesting invariant of a concircular transformation is the concircular curvature tensor $C$ \cite{KYano}. And it is defined by
\begin{equation}
C(X, Y)W = R(X, Y)W -\frac{r}{n(n - 1)}\{g(Y, W)X - g(X, W)Y\},
\end{equation}
Riemannian manifolds with vanishing concircular curvature tensor are of constant curvature. Thus, the concircular
curvature tensor is a measure of the failure of a Riemannian manifold to be of constant curvature.

Assume that, conformal Ricci soliton in $(LCS)_{n}$-manifold satisfies $C(\xi, X)\cdot S=0$. Then we have
\begin{equation}
 S(C(\xi, X)Y, Z) + S(Y, C(\xi, X)Z)=0.  \label{8.1}
\end{equation}
%%%%%%%%%%%%%%%%%%%%%%%%%%%%%%%%%%%%%
Using (\ref{1.10}) and (\ref{1.13}) in (\ref{8.1}), we get
\begin{eqnarray}
\nonumber \{n(n-1)(\alpha^2-\rho)+1\}\{g(X, Y)S(\xi, Z) - \eta(Y)S(X, Z) + g(X, Z)S(Y,\xi)\\
 - \eta(Z)S(Y, X)\}=0 \label{8.2}
\end{eqnarray}
%%%%%%%%%%%%%%%%%%%%%%%%%%%%%%%%%
Now we proceed the calculations for $n(n-1)(\alpha^2-\rho)+1 \neq 0$. \\
Plugging $Y=\xi$ in (\ref{8.2}) and then using (\ref{6.4}), gives rise to
\begin{equation}
 S(X, Z)= (k-\alpha)g(X, Z) \label{8.3}
\end{equation}
%%%%%%%%%%%%%%%%%%%%%%%%%%%%
Therefore one can state the following:
\begin{thm}
A conformal Ricci soliton in $(LCS)_{n}$-manifold satisfying $C(\xi, X)\cdot S=0$ is an Einstein manifold provided $n(n-1)(\alpha^2-\rho)+1\neq 0$.
\end{thm}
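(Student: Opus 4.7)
The condition $C(\xi,X)\cdot S=0$ unpacks, through the standard derivation action on the Ricci tensor, to
\[
S(C(\xi,X)Y,Z) + S(Y,C(\xi,X)Z) = 0,
\]
which is precisely (\ref{8.1}). My plan is to (i) evaluate the operator $C(\xi,X)$ in closed form on an arbitrary $Y$, (ii) insert that expression into the derivation identity to reach (\ref{8.2}), and (iii) specialise by setting $Y=\xi$ and using the $\eta$-Einstein identity (\ref{6.4}) to read off $S(X,Z)$.

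For step (i), I would combine the definition of the concircular curvature tensor with the $(LCS)_n$ identity (\ref{1.13}) to obtain
\[
C(\xi,X)Y \;=\; \left[(\alpha^{2}-\rho) - \frac{r}{n(n-1)}\right]\{g(X,Y)\xi-\eta(Y)X\}.
\]
The crucial input is the conformal Ricci flow constraint $R(g)=-1$ from (\ref{6.2}), which forces $r=-1$, so that the bracketed coefficient equals $[n(n-1)(\alpha^{2}-\rho)+1]/[n(n-1)]$. Substituting this expression into the derivation identity, collecting like terms, and factoring out the common scalar produces (\ref{8.2}).

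For step (iii), under the standing hypothesis $n(n-1)(\alpha^{2}-\rho)+1\neq 0$ the scalar factor in (\ref{8.2}) may be cancelled. Setting $Y=\xi$ in the resulting identity, and using (\ref{6.4}) together with (\ref{1.8}) to compute $S(\xi,Z)$ and $S(\xi,\xi)$, the contributions proportional to $\eta(X)\eta(Z)$ cancel in pairs and one is left with an identity of the shape $S(X,Z)=\mu\,g(X,Z)$, where $\mu$ is a constant expressible through $k$ and $\alpha$. This is exactly (\ref{8.3}), and the Einstein conclusion is immediate.

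The main obstacle I anticipate is nothing conceptual but rather careful bookkeeping: tracking the signs introduced by $\eta(\xi)=-1$ when simplifying $S(\xi,\cdot)$ from the $\eta$-Einstein form, and invoking the scalar-curvature constraint at exactly the right moment so that the coefficient $n(n-1)(\alpha^{2}-\rho)+1$ emerges cleanly as a single factor that can be divided out. Once that factor is correctly isolated, the remaining argument is pointwise linear algebra in the tangent space.
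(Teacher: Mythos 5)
Your proposal is correct and follows essentially the same route as the paper: evaluate $C(\xi,X)$ via (\ref{1.13}) and the definition of the concircular tensor, substitute into the derivation identity (\ref{8.1}) to obtain (\ref{8.2}), cancel the nonzero factor $n(n-1)(\alpha^2-\rho)+1$, and set $Y=\xi$ with (\ref{6.4}) to reach (\ref{8.3}). Your explicit use of the conformal Ricci flow constraint $R(g)=-1$ to turn the coefficient $(\alpha^2-\rho)-\tfrac{r}{n(n-1)}$ into $\tfrac{n(n-1)(\alpha^2-\rho)+1}{n(n-1)}$ is precisely the step the paper leaves implicit, and it correctly explains where that factor comes from.
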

%%%%%%%%%%%%%%%%%%%%%%%%%%%%%%%%%
{\bf{Acknowledgement:}} Vishnuvardhana S.V. was supported by the Department of Science and Technology, India through
the SRF [IF140186] DST/INSPIRE FELLOWSHIP/2014/181.
%%%%%%%%%%%%%%%%%%%%%%%%%%%%%%%%%

%%%%%%%%%%%%%%%%%%%%%%%%%%%%%%%%%%%%%%%%%%%%

\begin{thebibliography}{99}
%%%%%%%%%%%%%%%%%%%%%%%%%%%%%%%%%%%%%%
\bibitem{MAtceken} M. Atceken,\,\,{\em On geometry of submanifolds of $(LCS)_{n}$-manifolds,} \,\,
Int. J. Math. Math. Sci., 2012, doi:10.1155/2012/304647.
%%%%%%%%%%%%%%%%%%%%%%%%%%%%%
\bibitem{ATEHUI}
M. Atceken and S. K. Hui, \emph{Slant and pseudo-slant submanifolds of LCS-manifolds}, Czechoslovak Math. J., 63, 2013, 177-190.
%%%%%%%%%%%%%%%%%%%%%%%%%%%%%%%%%%%%%%%%%%%%%%5
\bibitem{NBAB1} N. Basu and A. Bhattacharyya,\,\ {\em Deformation of Curvature Tensors Under Conformal Ricci Flow,}\,\, Lobachevskii
J. Math., 35(1), 2014, 38-42.
%%%%%%%%%%%%%%%%%%%%%%%%%%%%%%555
\bibitem{NBAB2} N. Basu and A. Bhattacharyya,\,\ {\em Conformal Ricci soliton in Kenmotsu manifold,}\,\, Global J. Advanced Research
on Classical and Modern Geometries, 4(1), 2015, 15-21.
%%%%%%%%%%%%%%%%%%%%%%%%5555
\bibitem{GCPMDDMMR} G. Catino, P. Mastrolia, D. D. Monticelli and M. Rigoli,\,\ {\em Conformal Ricci solitons
and related integrability conditions}\,\, to appear in Advances in Geom., 16(3), 2016.
%%%%%%%%%%%%%%%%%%%%%%%%%%%%%%%%%%%%%%%5
\bibitem{CHS}
S. Chandra, S. K. Hui and A. A. Shaikh, \emph{Second order parallel tensors and ricci solitons on $(LCS)_n$-manifolds},
Commun. Korean Math. Soc., 30, 2015, 123-130.
%%%%%%%%%%%%%%%%%%%%%%%%%%%%%%%%%%%%%%%%%%%%%%%%%%%%%%%%%%%%%%%%%
\bibitem{UCDeNGuha} U. C. De and N. Guha,\,\ {\em On generalized recurrent manifold,} J. Nat. Acad. Math. 9, 1991,
85-92.
%%%%%%%%%%%%%%%%%%%%%%%%%%%%%5
\bibitem{UCDeNGuhaDKamilya} U. C. De, N. Guha and D. Kamilya,\,\ {\em On generalized Ricci-recurrent manifolds,} Tensor (NS), 56, 1995, 312-317.
%%%%%%%%%%%%%%%%%%%%%%%%%%%%%%%%%%%%%%%%%%
\bibitem{SDAB} S. Dey and A. Bhattacharyya,\,\,{\em Some Properties of Lorentzian $\alpha$-Sasakian
Manifolds with Respect to Quarter-symmetric Metric Connection,} \,\, Acta Univ. Palacki. Olomuc., Fac. rer. nat., Mathematica. 54(2), 2015, 21-40.
%%%%%%%%%%%%%%%%%%%%%%%%%%%%%%%%%55
\bibitem{AEFischer} A. E. Fischer,\,\ {\em An introduction to conformal Ricci flow,}\,\, Class. Quantum Grav. 21, 2004, 171-218.
%%%%%%%%%%%%%%%%%%%%%%%%%%%%%%%%%%%
\bibitem{DGhosh} D. Ghosh,\,\ {\em On projective recurrent spaces of second order,}\,\, Acad. Roy. Belg. Bull. Cl. Sci., 56, 1970, 1093-1099.
%%%%%%%%%%%%%%%%%%%%%%%%%%%%%%%%%%%
\bibitem{SKH} S. K. Hui,\,\ {\em On $\phi$-pseudo symmetries of $(LCS)_{n}$-manifolds,}\,\, Kyungpook Math.J., 53, 2013, 285-294.
%%%%%%%%%%%%%%%%%%
\bibitem{SHMA}
S. K. Hui and M. Atceken, \emph{Contact warped product semi-slant submanifolds of $(LCS)_n$-manifolds},
Acta Univ. Sapientiae Mathematica, 3(2), 2011, 212-224.
%%%%%%%%%%%%%%%%%%%%%%%%%%%%%%%%%%%%%%%%%%%%%%
\bibitem{SKH-DC}
S. K. Hui and D. Chakraborty, \emph{Some types of Ricci solitons on (LCS)n-manifolds}, J. Math. Sci. Adv
ances and Applications, 37, 2016, 1-17.
%%%%%%%%%%%%%%%%%%%%%%%%%%%%%%%%%%%%%%%%%%%%%%5
\bibitem{HUC}
S. K. Hui, S. Uddin and D. Chakraborty, \emph{Infinitesimal $CL$-transformations on (LCS)n-manifolds} to appear in Palestine J. Math., 2017.
%%%%%%%%%%%%%%%%%%%%%%%%%%%%%%%%%%%%%%%%%%%%%%5
\bibitem{ALinchnerowicz} A. Linchnerowicz,\,\ {\em Courbure, nombres de Betti, et espaces symm´etriques,}\,\, Proc. Int. Cong.
Math., 2, 1950, 216-233.
%%%%%%%%%%%%%%%%%%%%%%%
\bibitem{KMatsumoto} K. Matsumoto,\,\,{\em On Lorentzian almost para contact manifolds,} \,\,
Bull.Yamagata Univ. Nat. Sci., 12, 1989, 151-156.
%%%%%%%%%%%%%%%%%%%%%%%%%%%%
\bibitem{IMihaiRRosca} I. Mihai and R. Rosca,\,\,{\em On Lorentzian para-Sasakian manifolds,} \,\,
World Scientific Publication, Singapore, 1992, 155-169.
%%%%%%%%%%%%%%%%%%%%%%%%%%%%%%%%%%%%%%
\bibitem{DNarainSYadav} D. Narain and S. Yadav,\,\,{\em On weak concircular symmetries of $(LCS)_{2n+1}$-manifolds,} \,\,
Glob. J. Sci. Front. Res., 12, 2012, 85-94.
%%%%%%%%%%%%%%%%%%%%%%%%%%%%%
\bibitem{BONeill} B. O'Neill,\,\ {\em Semi-Riemannian Geometry,}\,\, Academic Press, New York, 1983.
%%%%%%%%%%%%%%%%%%%%%%%%%%%
\bibitem{EMPatterson} E. M. Patterson,\,\ {\em Some theorems on Ricci-recurrent spaces,}\,\, J. London. Math. Soc. 27, 1952, 287-295.
%%%%%%%%%%%%%%%%%%%%%%%%%%%%%%
\bibitem{GPPRSM} G. P. Pokhariyal and R. S. Mishra.\,\,{ Curvature tensor and their relativistic significance
II.}\,\, Yokohama Mathematical Journal 19, 1971, 97-103.
%%%%%%%%%%%%%%%%%%%%%%%%%%%%%%%
\bibitem{DGPrakasha} D. G. Prakasha,\,\,{\em On Ricci $\eta$-recurrent $(LCS)_{n}$-manifolds,} \,\,
Acta Univ. Apulensis. 24, 2010, 109-118.
%%%%%%%%%%%%%%%%%%%%%%%%%%%%%%%%
\bibitem{HSRuse} H. S. Ruse,\,\ {\em A classification of $K^*$-spaces,}\,\, J. Lond. Math. Soc., 53, 1951, 212-229.
%%%%%%%%%%%%%%%%%%%%%%%%%%%%%%%%%
\bibitem{AAShaikh1} A.A. Shaikh,\,\,{\em On Lorentzian almost para contact manifolds with a structure of the concircular type,} \,\,
Kyungpook Math. J., 43, 2003, 305-314.
%%%%%%%%%%%%%%%%%%%%%%%%%%%%%%%%%%%%%5
\bibitem{AAShaikh2} A. A. Shaikh,\,\,{\em Some results on $(LCS)_{n}$-manifolds,} \,\,
J. Korean Math. Soc., 46, 2009, 449-461.
%%%%%%%%%%%%%%%%%%
\bibitem{AAShaikhKKBaishya1} A. A. Shaikh and K. K. Baishya,\,\,{\em On concircular structure spacetimes,} \,\,
J. Math. Stat., 1, 2005, 129-132.
%%%%%%%%%%%%%%%%%%%%%%%%%%%%%%%%55
\bibitem{AAShaikhKKBaishya2} A. A. Shaikh and K. K. Baishya,\,\,{\em On concircular structure spacetimes II,} \,\,
Am. J. Appl. Sci., 3(4), 2006, 1790-1794.
%%%%%%%%%%%%%%%%%%%%%%%%%%%%%%
\bibitem{AASSKHui} A. A. Shaikh and S.K. Hui,\,\,{\em On generalized $\phi$-recurrent $(LCS)_{n}$-manifolds,} \,\,
AIP Conference Proceedings, 1309, 2010, 419-429.
%%%%%%%%%%%%%%%%%%%%%%
\bibitem{AASYMSKH} A. A. Shaikh, Y. Matsuyama and S. K. Hui,\,\,{\em On invariant submanifolds of $(LCS)_{n}$-manifolds,} \,\,
J. Egyptian Math. Soc., 24, 2016, 263-269.
%%%%%%%%%%%%%%%%%%%%%%%%5
\bibitem{ASJPSRK} A. Singh, J. P. Singh and R. Kumar,\,\,{\em On a type of semi-generalized recurrent P-Sasakian
manifolds,} \,\, Facta Universitatis, Ser. Math. Inform., 31(1), 2016, 213-225.
%%%%%%%%%%%%%%%%%%%%%%%%%%%%%%%%%%
\bibitem{GTSVCSB} G. T. Sreenivasa, Venkatesha and C. S. Bagewadi,\,\,{\em Some results on $(LCS)_{2n+1}$-manifolds,} \,\,
Bull. Math. Anal. Appl., 1(3), 2009, 64-70.
%%%%%%%%%%%%%%%%%%%%%%%%%%%%%%%
\bibitem{VRTNK} Venkatesha and R. T. Naveen Kumar,\,\ {\em Some Symmetric Properties on $(LCS)_{n}$-manifolds,}\,\, Kyungpook Math. J., 55, 2015,
149-156.
%%%%%%%%%%%%%%%%%%%%%%%%%%5555
\bibitem{AGWalker} A. G. Walker,\,\ {\em On ruse's space of recurrent curvature,}\,\, Proceedings London Math. Soc., 52 (1951), 36-64.
%%%%%%%%%%%%%%%%%%%%%%%%%%%%%%%
\bibitem{SKYPKDDS} S. K. Yadav, P. K. Dwivedi and D. Suthar,\,\,{\em On $(LCS)_{2n+1}$-manifolds satisfying certain
conditions on the concircular curvature tensor,} \,\, Thai J. Math., 9(3), 2011, 597-603.
%%%%%%%%%%%%%%%%%%%%%%%%%%%
\bibitem{KYano} K. Yano,\,\ {\em Concircular geometry I. Concircular transformations,}
Proc. Imp. Acad. Tokyo, 16, 1940, 195-200.
%%%%%%%%%%%%%%%%%%%%%%%%%%%%%
\end{thebibliography}
\end{document}